\documentclass[12pt]{amsart}
\usepackage[hmargin=2.5cm,vmargin=2.5cm]{geometry}
\usepackage{amsfonts, amstext, amsmath, amsthm, amscd, amssymb}
\usepackage[pdftex]{graphicx, color}

\usepackage[noadjust]{cite}
\usepackage[hidelinks,pagebackref]{hyperref}
\usepackage{enumitem}
\usepackage{setspace}
\usepackage{float}
\usepackage{subcaption}
\usepackage{mathrsfs}
\usepackage{tikz}

\AtBeginDocument{
	\def\MR#1{}
}

\pagecolor{white}

\allowdisplaybreaks

\newcommand{\Z}{\mathbb{Z}}

\newcommand{\Q}{\mathbb{Q}}

\newtheorem{thm}{Theorem}
\numberwithin{thm}{section}

\newtheorem{conj}[thm]{Conjecture}
\newtheorem{prop}[thm]{Proposition}
\newtheorem{lemma}[thm]{Lemma}
\newtheorem{cor}[thm]{Corollary}

\newtheorem{conv}[thm]{Convention}

\newtheorem*{namedtheorem}{\theoremname}
\newcommand{\theoremname}{testing}

\theoremstyle{definition}

\newtheorem*{nameddef}{\defname}
\newcommand{\defname}{testing}

\theoremstyle{remark}

\begin{document}
	\title{On the Cosmetic Crossing Conjecture for Special Alternating Links}
	\author{Joe Boninger}
	\address{Department of Mathematics, Boston College, Chestnut Hill, MA}
	\email{boninger@bc.edu}
	\maketitle
	
	\begin{abstract}
		We prove that a family of links, which includes all special alternating knots, does not admit non-nugatory crossing changes which preserve the isotopy type of the link. Our proof incorporates a result of Lidman and Moore on crossing changes to knots with $L$-space branched double-covers, as well as tools from Scharlemann and Thompson's proof of the cosmetic crossing conjecture for the unknot.
	\end{abstract}
	
	\section{Introduction}
	
	The cosmetic crossing conjecture, attributed to Xiao-Song Lin \cite[Problem 1.58]{kir78}, posits that changing a nontrivial crossing in a link diagram must change the isotopy type of the link. More concretely, given an oriented link $L \subset S^3$, define a \emph{crossing disk} to be a disk $D \subset S^3$ which intersects $L$ transversely at two points of opposite orientation. A \emph{crossing change} is then performed by passing a neighborhood of one point of $L \cap D$ through a neighborhood of the other, as in Figure \ref{fig:cc}. The crossing is said to be \emph{nugatory} if $\partial D$ bounds a disk in $S^3 - L$, and a crossing change is \emph{cosmetic} if it preserves the isotopy type of $L$.
	
	\begin{conj}[Cosmetic Crossing Conjecture]
		\label{thm:ccc}
		For any knot $L \subset S^3$, only a nugatory crossing admits a cosmetic crossing change.
	\end{conj}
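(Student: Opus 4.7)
The plan is to suppose for contradiction that $L \subset S^3$ is a knot admitting a non-nugatory cosmetic crossing change, realized by a crossing disk $D$, and then lift everything to the branched double cover. Let $\Sigma = \Sigma_2(L)$ be the double cover of $S^3$ branched along $L$, and let $\widetilde{D}$ be the preimage of $D$. Because $D$ meets $L$ transversely in two points of opposite orientation, $\partial D$ lifts to a single knot $K = \partial \widetilde{D} \subset \Sigma$, and the crossing change on $L$ lifts to $\pm \tfrac{1}{2}$-surgery on $K$ (with respect to the disk framing). The cosmetic hypothesis says exactly that this surgery recovers $\Sigma$, so the conjecture is reduced to ruling out a certain cosmetic surgery upstairs.

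First I would carry out standard reductions: the unknot case of Conjecture~\ref{thm:ccc} is already due to Scharlemann-Thompson, and for composite $L$ a swallow-follow torus and innermost-disk argument should localize the crossing disk in a prime summand, so I may assume $L$ is prime and nontrivial. Second, I would pin down the topology of $K$ in $\Sigma$: adapting the sphere- and disk-surgery techniques from Scharlemann-Thompson's proof for the unknot, I would argue that non-nugatoriness of the crossing forces $\partial D$ to be essential in $S^3 \setminus L$ in a controlled way, and that $K$ is correspondingly nontrivial and null-homologous in $\Sigma$ (the latter because $\partial D$ has algebraic linking number zero with $L$).

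The third and decisive step is to contradict the existence of the cosmetic $\pm \tfrac{1}{2}$-surgery on $K$. The strategy is to combine a cosmetic-surgery obstruction package - Ni-Wu's constraints on cosmetic slopes in terms of the knot Floer $\tau$ and $d$-invariants, Hanselman's strengthening via involutive Heegaard Floer homology, and the behavior of the Casson-Walker invariant under rational surgery - all transplanted from knots in $S^3$ to knots in the rational homology sphere $\Sigma$. When $\Sigma$ happens to be an $L$-space one recovers the Lidman-Moore obstruction and wins immediately, which is the mechanism behind the paper's main theorem; the aim here is to push the same package through without the $L$-space hypothesis, using the specific form of the framing and the $\mathbb{Z}/2$-equivariance coming from the covering involution.

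The main obstacle is exactly this last step: the cosmetic surgery conjecture is itself open for a general knot in a general rational homology sphere, and for arbitrary $L$ there is no current Floer-theoretic input that rules out the lifted $\pm \tfrac{1}{2}$-surgery. A genuine proof of the full conjecture along these lines would therefore require either a new invariant that descends from $\Sigma$ to detect the crossing change (e.g.\ an equivariant refinement of $\tau$ or $d$ adapted to the branched cover), or a direct $S^3$-side obstruction from Alexander-polynomial, signature, or Khovanov-homology data strong enough to cover every knot and not just those whose double cover is an $L$-space; bridging this gap is where the difficulty of the problem is concentrated.
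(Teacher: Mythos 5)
The statement you are trying to prove is Conjecture~\ref{thm:ccc} itself, which is open; the paper does not prove it either, and only establishes the special case of Theorem~\ref{thm:main-one} (links bounding a definite Seifert surface whose branched double cover is an $L$-space, which includes special alternating knots). Your proposal is therefore not a proof, and you say so yourself: the ``third and decisive step'' --- ruling out the cosmetic surgery on the lifted curve in $\Sigma(L)$ for an arbitrary knot $L$ --- is exactly where the conjecture remains open, and no combination of the Ni--Wu, Hanselman, or Casson--Walker obstructions is currently known to transplant to knots in a general rational homology sphere. Invoking them ``transplanted from knots in $S^3$'' is not an argument; it is a restatement of the difficulty. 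There is also a technical slip in your setup: since $\mathrm{lk}(\partial D, L) = 0$, the curve $\partial D$ lifts to \emph{two} disjoint circles in $\Sigma(L)$ (the preimage $\widetilde{D}$ of the crossing disk is an annulus), and the single closed curve on which the Montesinos trick performs surgery is $\tilde{\gamma}$, the preimage of a crossing arc $\gamma \subset D$ joining the two points of $L \cap D$ --- this is the curve appearing in Theorem~\ref{thm:lm}.

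Even in the $L$-space case, your outline does not recover what the paper actually proves. The Lidman--Moore obstruction alone requires either that $\tilde{\gamma}$ be nullhomologous in $\Sigma(L)$ or that $\det(L)$ be square-free; it does not ``win immediately'' for all $L$-space branched double covers. The paper's new content is the use of a definite Seifert surface $S$: Scharlemann--Thompson's tautness argument shows $S$ remains taut (hence, by Lemma~\ref{thm:taut_def}, definite) after the crossing change; if the crossing arc separates $S$, a lifted subsurface exhibits $\tilde{\gamma}$ as nullhomologous and Theorem~\ref{thm:lm} applies; if it does not separate, comparing the Gordon--Litherland matrices $G$ and $G'$, which differ only in the $(1,1)$-entry by $2$ yet have equal determinant with $\det(G_{11}) \neq 0$ by definiteness, yields a contradiction. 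If you want to contribute to the conjecture along the lines you sketch, you should either restrict to a class of links where a concrete obstruction closes your step three, or supply the new equivariant invariant you allude to; as written, the proposal identifies the problem rather than solving it.
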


	Conjecture \ref{thm:ccc} has been affirmed for two-bridge knots \cite{tor99} and fibered knots \cite{kal12}, and significant partial results exist for genus one knots and satellite knots \cite{ito22, ito222, bk16, bfkp12}. Further, Lidman and Moore have verified the conjecture for all knots $L \subset S^3$ such that the branched double-cover $\Sigma(L)$ is an $L$-space, and $L$ has square-free determinant \cite{lm17}; their work has been extended by Ito \cite{ito223}.
	
	In this note, we prove the cosmetic crossing conjecture for all special alternating knots in $S^3$. (The case of special alternating knots with square-free determinant is included in \cite{lm17}.) 
	\begin{thm}
		\label{thm:spec-alt}
		Let $L \subset S^3$ be a special alternating knot. Then $L$ admits no cosmetic, non-nugatory crossing change.
	\end{thm}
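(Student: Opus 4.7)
The plan is to argue by contradiction. Suppose $L$ is a special alternating knot admitting a cosmetic, non-nugatory crossing change along a crossing disk $D$. Because $L$ is alternating the branched double cover $\Sigma(L)$ is an $L$-space, so the result of \cite{lm17} resolves the case when $\det(L)$ is square-free. The content of the argument is therefore to treat the case that $\det(L)$ has a square factor, where that obstruction alone is insufficient.

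First I would fix a canonical Seifert surface $F$ for $L$, namely the one produced by Seifert's algorithm on a special alternating diagram. Specialness guarantees that every crossing of the diagram has the same sign, so $F$ is a positive (or negative) Seifert surface whose Seifert form is definite; by Crowell/Murasugi it realizes the Seifert genus of $L$. Using the essential uniqueness of minimal genus Seifert surfaces for alternating knots (via work of Hirasawa-Sakuma), together with the cosmetic hypothesis, I would argue that the surface $F'$ obtained by performing the crossing change on $F$ and tubing across $D$ is again minimal genus, hence ambient isotopic to $F$.

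Next I would follow the strategy of Scharlemann and Thompson and put $D$ into a controlled position relative to $F$. Incompressibility of $F$ eliminates closed curves of $D \cap F$, and an outermost-arc analysis, using the identification $F \cong F'$ from the previous step, should reduce the intersection pattern to a simple configuration. Passing this picture to $\Sigma(L)$, the crossing change lifts to a half-integer Dehn surgery on a knot $\widetilde c \subset \Sigma(L)$ (the preimage of the crossing arc), while the cosmetic hypothesis supplies an orientation-preserving self-homeomorphism of $\Sigma(L)$ exchanging the two surgery slopes on $\widetilde c$. The controlled geometry of $\widetilde c$, combined with the Floer-theoretic obstruction of \cite{lm17} and the definiteness of the Seifert form, should rule out such a surgery.

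The main obstacle is this final step: extending the Lidman-Moore obstruction past the square-free determinant hypothesis. The extra rigidity supplied by the essentially unique, definite Seifert surface of a special alternating knot, together with the Scharlemann-Thompson control of the crossing disk, is what must absorb the torsion in $H_1(\Sigma(L))$ that appears when $\det(L)$ has a square factor, and identifying the precise interaction between these geometric and Floer-theoretic inputs is where I expect the real work to lie.
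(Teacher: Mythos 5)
Your proposal assembles the right ingredients (the $L$-space property of $\Sigma(L)$, the Lidman--Moore obstruction, a definite Seifert surface coming from the special alternating diagram, and Scharlemann--Thompson's techniques), but it stops exactly where the proof has to happen: you state that ``extending the Lidman--Moore obstruction past the square-free determinant hypothesis'' is the main obstacle and leave it unresolved. That is the whole content of the theorem, so as written this is a plan with a genuine gap rather than a proof. Moreover, the specific mechanisms you propose for closing it are not the ones that work. The key observation is that the Lidman--Moore result needed here (\cite[Remark 13]{lm17}, stated as Theorem \ref{thm:lm} above) does not require square-free determinant at all: it requires only that the lift $\tilde{\gamma}$ of a crossing arc be nullhomologous in $\Sigma(L)$; square-freeness in \cite{lm17} is merely one way to force that. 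The actual argument runs as follows. Take a Seifert surface $S$ taut in the complement of $L \cup \partial D$ meeting $D$ in a single crossing arc $\gamma$; by Scharlemann--Thompson's ``taut in two of the three fillings $M_{-1}, M_0, M_\infty$'' claim and the homeomorphism $M_{-1} \cong M_\infty$ supplied by the cosmetic hypothesis, $S$ is taut both before and after the crossing change, and since $L$ bounds a definite surface, taut is equivalent to definite (Lemma \ref{thm:taut_def}). Then one splits into cases: if $\gamma$ separates $S$, the lift of one complementary component is an orientable surface in $\Sigma(L)$ with boundary $\tilde{\gamma}$, so $\tilde{\gamma}$ is nullhomologous and Theorem \ref{thm:lm} makes the crossing nugatory; if $\gamma$ does not separate $S$, the Gordon--Litherland matrices of the two definite surfaces, in a basis with exactly one curve meeting $D$ once, differ only in the $(1,1)$-entry by $2$ yet must have equal determinant, and definiteness of the complementary minor gives $g_{11} = g_{11}+2$, a contradiction. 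None of these three moves (nullhomology via a separating arc, the two-of-three tautness transfer, the determinant comparison of definite forms) appears in your outline.

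Two of your intermediate steps are also off-target. First, you do not need, and the paper does not use, uniqueness of minimal genus Seifert surfaces for (special) alternating knots; knowing that the post-change surface is isotopic to $F$ would not by itself produce a contradiction, since what matters is comparing the two Gordon--Litherland forms of the \emph{same} surface $S$ in $M_\infty$ and $M_{-1}$ with respect to a basis adapted to the crossing arc. (Also, no tubing is involved: after shrinking $D$ so that $S \cap D = \gamma$, the surface is disjoint from $\partial D$ and simply persists through the surgery.) Second, your final step --- an orientation-preserving self-homeomorphism of $\Sigma(L)$ exchanging the two surgery slopes on $\tilde{c}$, combined with ``controlled geometry'' --- is vague and is not how the torsion in $H_1(\Sigma(L))$ gets absorbed; the absorption happens because either $\tilde{\gamma}$ is literally nullhomologous (separating case) or the crossing-arc configuration is impossible for a definite surface (non-separating case).
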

	Actually, we prove Conjecture \ref{thm:ccc} for a family of oriented links which includes all non-split special alternating links with certain orientations, and some non-alternating links---see Theorem \ref{thm:main-one} below.

	A diagram $D \subset S^2$ of a link $L \subset S^3$ is \emph{alternating} if crossings alternate over-under-~as one traverses any link component of the diagram. The diagram is \emph{special} if one of its checkerboard surfaces, constructed by shading the components of $S^2 - D$ in a checkerboard fashion and taking the union of the shaded regions with half-twisted bands at each crossing, is orientable. Equivalently, a diagram is special if one of its Tait graphs is bipartite. A link $L \subset S^3$ is called \emph{special alternating} if it admits a diagram which is both alternating and special. Special alternating links include $(2,n)$-torus links, and many twist and pretzel knots. More generally, as alluded to above, a special alternating diagram can be constructed from any embedding of a bipartite planar graph in $S^2$.
	
	Our proof of Theorem \ref{thm:spec-alt} incorporates a key result from Lidman and Moore \cite{lm17}, as well as tools from Scharlemann and Thompson's proof of Conjecture \ref{thm:ccc} for the unknot \cite[Theorem 1.4]{st89}. As a corollary, we obtain the following:
	
	\begin{cor}
		\label{thm:seifert}
		Suppose a link $L \subset S^3$ admits a cosmetic, non-nugatory crossing change, and $\Sigma(L)$ is an $L$-space. Then $L$ bounds two minimal-genus Seifert surfaces, with Seifert forms represented by matrices $(v_{ij})$ and $(v'_{ij})$, such that $v_{11} = v'_{11} + 1$ and $v_{ij} = v'_{ij}$ otherwise.
	\end{cor}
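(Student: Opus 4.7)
The plan is to extract the two Seifert surfaces as a byproduct of the Lidman--Moore framework \cite{lm17}, which is already the key Heegaard Floer input used for Theorem~\ref{thm:main-one}. Let $D$ be the crossing disk for the hypothesized cosmetic, non-nugatory crossing change, with $L \cap D$ consisting of two points of opposite sign, and let $L'$ denote the resulting link; by the cosmetic hypothesis $L' \cong L$. The crossing change amounts to $(\pm 1)$-surgery along $\partial D$, which is unknotted in $S^3$ and null-homologous in $S^3 \setminus L$ because the signed count of $L \cap D$ is zero.

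First I would produce a minimal-genus Seifert surface $F$ for $L$ whose intersection with $D$ is exactly one arc $\alpha$ connecting the two points of $L \cap D$. Starting with an arbitrary minimal-genus $F$, I would make $F$ transverse to $D$ so that $F \cap D$ is a disjoint union of circles together with arcs having endpoints on $L \cap D$; innermost-disk compressions in $D$ eliminate the circles, and the remaining arcs can be reduced to a single one by cut-and-paste in the sutured-manifold complement. Here the $L$-space hypothesis on $\Sigma(L)$ does real work via Lidman and Moore's use of Heegaard Floer homology to control the genera of Seifert surfaces in the manifold obtained from $S^3 \setminus L$ by surgery along $\partial D$, ensuring that these reductions preserve minimal genus. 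Performing the crossing change on $F$ then yields a surface $F' \subset S^3$ of the same Euler characteristic; since $L' \cong L$, transporting $F'$ through the ambient isotopy $L' \to L$ gives a second minimal-genus Seifert surface for $L$.

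Next I would compare Seifert forms. The abstract surfaces $F$ and $F'$ are naturally identified by the homeomorphism that is the identity outside a neighborhood of $\alpha$, so they share a common $H_1$. Assuming $\alpha$ is non-separating in $F$ (the case to which the argument reduces), I would choose a basis $\{e_1, \ldots, e_{2g}\}$ in which $e_1$ is represented by a simple closed curve $\gamma$ meeting $\alpha$ transversely once, while the remaining $e_i$ are represented by curves disjoint from a neighborhood of $\alpha$. Every entry of the Seifert matrix can then be computed by a linking number of push-offs lying outside the neighborhood of $\alpha$ except for $v_{11} = \operatorname{lk}(\gamma^+, \gamma)$, which receives exactly the $\pm 1$ correction coming from the twist on the band along $\alpha$. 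With the sign convention on the crossing change chosen consistently, this gives $v_{11} = v'_{11} + 1$ and $v_{ij} = v'_{ij}$ otherwise.

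The main obstacle is the middle paragraph: arranging $F \cap D$ to be a single arc while preserving minimal genus. This is precisely the geometric input that Lidman and Moore's work supplies in the $L$-space setting; once it is in place, the rest of the corollary is a routine local computation of linking numbers through a twist region.
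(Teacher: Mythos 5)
Your closing linking-number computation is essentially the paper's (the paper phrases it via the Gordon--Litherland form, whose $(1,1)$-entry drops by $2$, equivalent to the Seifert form dropping by $1$), but the two steps before it have genuine gaps. First, the step you flag as the ``main obstacle''---arranging a minimal-genus $F$ to meet $D$ in a single arc while preserving minimal genus---is not supplied by Lidman and Moore. Their result (Theorem~\ref{thm:lm}, \cite[Remark 13]{lm17}) says only that if the crossing change is cosmetic and the lift $\tilde{\gamma}$ of a crossing arc is nullhomologous in $\Sigma(L)$, then the crossing is nugatory; it gives no control whatsoever on Seifert genus before or after surgery on $\partial D$. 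The tool that actually does this work is Scharlemann--Thompson \cite[Claim 1]{st89}: one takes $S$ taut in $M = S^3 - n(\partial D \cup L)$ (such a surface can be assumed, after shrinking $D$, to meet $D$ in a single crossing arc), and their claim says $S$ is taut in at least two of $M_{-1}$, $M_0$, $M_\infty$; combined with the cosmetic homeomorphism $M_{-1} \cong M_\infty$, the surface is taut---hence minimal genus---both before and after the crossing change. Your alternative route, starting from a surface taut in $S^3 - n(L)$ and cleaning up $F \cap D$ by innermost disks and cut-and-paste, has no argument that tautness survives: circles of $F \cap D$ may bound disks in $D$ punctured by $L$, and arcs of $F \cap D$ may run to $\partial D$ (nothing forces $F$ to miss $\partial D$), and eliminating these is exactly the point where the sutured-manifold input of \cite{st89} is needed. (Once one has a taut $F$ meeting $D$ in a single arc, minimality of $F'$ is automatic from $L' \cong L$, so the entire burden sits on producing that $F$.)

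Second, you write ``assuming $\alpha$ is non-separating in $F$ (the case to which the argument reduces)'' without saying how the reduction goes, and this is precisely where the $L$-space hypothesis is actually spent. If the crossing arc separates $S$, then one of the two pieces of $S - \gamma$ lifts to an orientable subsurface of $\Sigma(L)$ with boundary $\tilde{\gamma}$, so $\tilde{\gamma}$ is nullhomologous, and Theorem~\ref{thm:lm} forces the crossing to be nugatory, contradicting the hypothesis; that is how the paper rules out the separating case. In your proposal the $L$-space condition has instead been attributed to a (nonexistent) genus-control statement, leaving the separating case unhandled. So the architecture should be: Scharlemann--Thompson for tautness on both sides of the crossing change, Lidman--Moore plus the $L$-space hypothesis to exclude a separating crossing arc, and then your local twist computation, which is fine.
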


	Corollary \ref{thm:seifert} is analogous to a finding of Balm, Friedl, Kalfagianni and Powell \cite[Corollary 1.3]{bfkp12}, who use a related approach to study genus one knots.

	\subsection{Acknowledgements}
	
	The author thanks Jacob Caudell for introducing him to the cosmetic crossing conjecture, Josh Greene for helpful conversations, and an anonymous reviewer for insightful feedback and corrections. This material is based upon work supported by the National Science Foundation under Award No.~2202704.
	
	\begin{figure}[H]
		\centering
		\includegraphics[height=3cm]{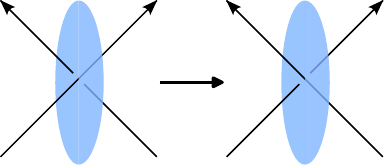}
		\caption{A crossing change}
		\label{fig:cc}
	\end{figure}
	
	\section{Background}
	
	A three-manifold $Y$ is an \emph{$L$-space} if it is a rational homology sphere with rank$(\widehat{HF}(Y)) = |H_1(Y;\Z)|$, where $\widehat{HF}$ denotes the hat flavor of Heegaard Floer homology. Of importance to us is the fact that, if $L \subset S^3$ is an alternating link, then its branched double-cover, $\Sigma(L)$, is an $L$-space \cite{os05}.
	
	Let $L \subset S^3$, and $D$ a crossing disk for $L$ as above. A \emph{crossing arc} is an embedded arc $\gamma \subset D$ connecting the two points of $L \cap D$, and we use $\tilde{\gamma}$ to denote the closed curve which is the preimage of $\gamma$ in the branched covering $\Sigma(L) \to S^3$. Lidman and Moore proved the following:
	
	\begin{thm}[\protect{\cite[Remark 13]{lm17}}]
		\label{thm:lm}
	Let $L$ be an oriented knot with $\Sigma(L)$ an $L$-space, $D$ a crossing disk for $L$, and $\gamma$ a crossing arc in $D$. If the crossing change induced by $D$ is cosmetic, and $\tilde{\gamma}$ is nullhomologous in $\Sigma(L)$, then $D$ is nugatory.
	\end{thm}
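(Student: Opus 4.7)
The plan is to lift everything to the branched double cover, convert the cosmetic crossing change into a cosmetic Dehn surgery on $\tilde\gamma$, and then exploit the $L$-space hypothesis via Heegaard Floer homology.

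First I would set up the Montesinos trick. Because $D$ meets $L$ transversely in two branch points, the preimage $\tilde D \subset \Sigma(L)$ is an annulus whose core is $\tilde\gamma$ and whose two boundary circles each project homeomorphically onto $\partial D$. The crossing change is realized in $S^3$ by $\pm 1$-surgery on the crossing circle $\partial D$; lifting this through the branched cover gives $\pm 1$-surgery on each boundary component of $\tilde D$, and a Rolfsen/slam-dunk calculation on these two $\tilde D$-framed parallel copies of $\tilde\gamma$ identifies the resulting manifold as $\Sigma(L)_{\pm 1/2}(\tilde\gamma)$. Thus, writing $L'$ for the link after the crossing change, $\Sigma(L') \cong \Sigma(L)_{\pm 1/2}(\tilde\gamma)$, and cosmeticity of the crossing change gives $\Sigma(L)_{\pm 1/2}(\tilde\gamma) \cong \Sigma(L) = \Sigma(L)_\infty(\tilde\gamma)$---a genuine cosmetic Dehn surgery on the null-homologous knot $\tilde\gamma$ inside the $L$-space $\Sigma(L)$.

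Next I would invoke the Ozsv\'ath--Szab\'o correction-term machinery. Combining the mapping cone formula for $1/n$-surgery on a null-homologous knot with the very restrictive constraints the $L$-space hypothesis imposes on the $d$-invariants of the surgered manifold, one forces the knot Floer homology of $\tilde\gamma$ to agree with that of the unknot, so $\tilde\gamma$ must bound an embedded disk $\tilde E \subset \Sigma(L)$. Finally I would descend back to $S^3$: the covering involution $\tau$ of $\Sigma(L) \to S^3$ preserves $\tilde\gamma$ setwise with quotient $\gamma$, and by an equivariant Dehn lemma in the style of Meeks--Yau the disk $\tilde E$ may be chosen $\tau$-invariant. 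Its quotient then exhibits $\partial D$ as bounding a disk disjoint from $L$ in $S^3$, so $D$ is nugatory.

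The main obstacle in this plan is the cosmetic-surgery step. Although the combination of $L$-space plus nullhomology is very restrictive, ruling out the $\pm 1/2$-surgery actually requires careful bookkeeping of $d$-invariants across all relevant spin$^c$ structures together with an application of the large-surgery formula; this is where essentially all of the Heegaard Floer input is concentrated, and it is the only non-topological step in the argument. The descent via an equivariant Dehn lemma, while delicate, should be routine once $\tilde\gamma$ is known to be unknotted and invariant under $\tau$.
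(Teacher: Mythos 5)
This theorem is not proved in the paper---it is quoted from Lidman--Moore \cite{lm17}---and your sketch follows essentially the same route as their argument: the Montesinos trick converts the cosmetic crossing change into a cosmetic half-integer surgery on the nullhomologous knot $\tilde{\gamma}$ in the $L$-space $\Sigma(L)$, the Heegaard Floer step you flag as the main obstacle is precisely the surgery characterization of the unknot in an $L$-space due to Gainullin \cite{gai18} (which Lidman--Moore cite rather than re-derive from the mapping cone formula and $d$-invariant bookkeeping), and the descent to $S^3$ goes through the equivariant Dehn lemma exactly as in their nugatory-crossing criterion. The only imprecision is at the very end: the quotient of the $\tau$-invariant disk is a disk whose boundary is $\gamma$ together with an arc of $L$ (the image of the fixed arc of $\tau$), not a disk bounded by $\partial D$, so one further short, standard argument is needed to convert it into a disk bounded by the crossing circle in the complement of $L$.
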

	
	Their argument uses the surgery characterization of an unknot in an $L$-space, due to Gainullin \cite{gai18}. In the appendix, we extend Theorem \ref{thm:lm} to links.
	
	Next, we recall the \emph{Gordon-Litherland form}. Given a surface $S \subset S^3$, this is a symmetric, bilinear form $\mathcal{G}_S : H_1(S)^2 \to \Z$ \cite{gl78}. Briefly, let $\nu(S) \subset S^3$ denote the unit normal bundle of $S$, with projection $p : \nu(S) \to S$. Given homology classes $a, b \in H_1(S)$, represented by embedded multi-curves $\alpha, \beta \subset S$, we define
	$$
	\mathcal{G}_S(a,b) = \text{lk}(\alpha, p^{-1} \beta),
	$$
	where lk is the linking number. If $L \subset S^3$ is an oriented link, and $S$ a compatibly oriented Seifert surface for $L$, then $\mathcal{G}_S$ coincides with the symmetrized Seifert form of $S$, and the signature $\sigma(\mathcal{G}_S)$ equals the signature of $L$. If, in addition, $S$ is connected, then the nullity $\eta(\mathcal{G}_S)$ is a link invariant called the \emph{nullity} of $L$, $\eta(L)$. (In some literature, $\eta(L)$ is defined to be $\eta(\mathcal{G}_S) + 1$.)
	
	\begin{conv}
		All links are oriented, and we require Seifert surfaces be oriented compatibly with the link. We allow Seifert surfaces to be disconnected, but not to have closed components.
	\end{conv}

	A surface in $S^3$ is called \emph{definite} if its Gordon-Litherland form is positive- or negative-definite. If $D \subset S^2$ is an alternating link diagram, then the two checkerboard surfaces of $D$ are known to be definite; conversely, definite surfaces can be used to characterize alternating links topologically \cite{g17, h17}. In particular, a suitably oriented special alternating link bounds a definite Seifert surface.
	
	\section{Proof of Main Result}
	
	We say a Seifert surface spanning an oriented, non-split link $L \subset S^3$ is \emph{taut} if it has maximal Euler characteristic among all Seifert surfaces of $L$. (For equivalence with the standard definition of tautness, see \cite[Lemma 1.2]{st89}.) We have:
	
	\begin{lemma}
		\label{thm:taut_def}
		Suppose non-split $L \subset S^3$ bounds a definite Seifert surface $S$. Then $S$ is taut in $S^3 - L$, and conversely every taut Seifert surface for $L$ is definite.
	\end{lemma}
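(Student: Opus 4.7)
The plan is to deduce both directions of the lemma from a linear-algebra bound on the Gordon--Litherland form $\mathcal{G}_S$. Since a Seifert surface has no closed components, $H_2(S)=0$, so combining $\chi(S)=b_0(S)-b_1(S)$ with the rank-nullity decomposition $b_1(S)=\mathrm{rank}(\mathcal{G}_S)+\eta(\mathcal{G}_S)$ yields
\[
\chi(S) \;=\; b_0(S) - \mathrm{rank}(\mathcal{G}_S) - \eta(\mathcal{G}_S).
\]
The background section records $\sigma(\mathcal{G}_S)=\sigma(L)$, and a standard linear-algebra argument then forces $\mathrm{rank}(\mathcal{G}_S)\geq|\sigma(L)|$, with equality exactly when the nondegenerate part of $\mathcal{G}_S$ is positive- or negative-definite. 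Coupled with $\eta(\mathcal{G}_S)=0$, equality in this rank bound is precisely the condition that $\mathcal{G}_S$ is itself definite.

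I would first dispatch the connected case, which already suffices for the knot application (Theorem~\ref{thm:spec-alt}), since every Seifert surface for a knot is automatically connected. For connected $S$, $b_0(S)=1$ and $\eta(\mathcal{G}_S)=\eta(L)$, so the identity specializes to the uniform upper bound
\[
\chi(S) \;\leq\; 1 - |\sigma(L)| - \eta(L),
\]
with equality iff the nondegenerate part of $\mathcal{G}_S$ is definite. The hypothesis that $L$ bounds a definite (in particular nondegenerate) Seifert surface forces $\eta(L)=0$, so the bound becomes $\chi(S)\leq 1-|\sigma(L)|$, saturated exactly when $\mathcal{G}_S$ is definite. Both directions of the lemma follow: a definite $S$ achieves the maximum, and any taut $S$ must saturate the bound and hence be definite.

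To extend to disconnected Seifert surfaces, I would apply the argument componentwise. Writing $S=S_1\sqcup\cdots\sqcup S_k$ with $L_i=\partial S_i$, the form splits orthogonally as $\mathcal{G}_S=\bigoplus_i\mathcal{G}_{S_i}$, and summing the connected bound over components gives $\chi(S)\leq b_0(S)-\sum_i\bigl(|\sigma(L_i)|+\eta(L_i)\bigr)$, with equality iff every $\mathcal{G}_{S_i}$ (equivalently, $\mathcal{G}_S$) is definite. The main obstacle is showing this bound is consistent across the different partitions of $L$ realized by different Seifert surfaces, so that the definite $S$ indeed attains the \emph{global} maximum of $\chi$. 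I would verify this via a tubing calculation: tubing two components $S_i,S_j$ into a single one decreases $\chi$ by $2$, and it decreases the right-hand side by $2$ precisely when the pieces are compatibly definite (via the triangle inequality $|\sigma(L_i)+\sigma(L_j)|\leq|\sigma(L_i)|+|\sigma(L_j)|$), which is automatic for components of a definite surface. This makes the bound uniform and upgrades the connected-case equivalence to the full lemma.
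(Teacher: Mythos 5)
Your connected case is sound, and it is essentially the paper's own key step: for a compatibly oriented Seifert surface the Gordon--Litherland form has signature $\sigma(L)$, so $b_1\geq|\sigma(L)|$ with equality exactly for definite surfaces; for knots, where every Seifert surface is connected, this already gives the lemma. The trouble is the disconnected case, which is not optional here: the paper's convention allows disconnected Seifert surfaces, Theorem~\ref{thm:main-one} concerns links, and the definite surface $S$ in the hypothesis may itself be disconnected (in which case your claim that its existence forces $\eta(L)=0$ is false --- tubing $S$ to a connected surface gives $\eta(L)=b_0(S)-1$). Your componentwise reduction rests on the assertion that $\mathcal{G}_S$ splits orthogonally as $\bigoplus_i\mathcal{G}_{S_i}$, and this is simply not true: for classes $\alpha\subset S_i$, $\beta\subset S_j$ with $i\neq j$ the pairing is $2\,\mathrm{lk}(\alpha,\beta)$, and distinct components of a Seifert surface can link. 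The Whitehead link illustrates the damage: it is a boundary link with unknotted components, so it bounds disjoint Seifert surfaces with $\sum_i|\sigma(L_i)|=0$, yet $|\sigma(L)|=1$; thus your bound $b_1(S)\geq\sum_i\bigl(|\sigma(L_i)|+\eta(L_i)\bigr)$ is genuinely weaker than the needed $b_1(S)\geq|\sigma(L)|$, and ``each $\mathcal{G}_{S_i}$ definite'' is not equivalent to ``$\mathcal{G}_S$ definite,'' which also breaks your converse direction (tautness would only give definiteness of the diagonal blocks).

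The second, related gap is the comparison across partitions and component counts, which you correctly identify as ``the main obstacle'' but do not close: the triangle inequality $|\sigma(L_i)+\sigma(L_j)|\leq|\sigma(L_i)|+|\sigma(L_j)|$ points in the unhelpful direction, $\sigma(L_i)+\sigma(L_j)$ need not equal $\sigma(L_i\cup L_j)$, and the nullity terms are left untouched, so nothing shows that a definite surface maximizes $\chi$ against competitors with a different number of components. The paper avoids both problems with two global steps: (a) it applies the signature bound to the \emph{full} Gordon--Litherland form of an arbitrary, possibly disconnected, compatibly oriented Seifert surface $S'$ (no splitting), getting $b_1(S')\geq|\sigma(L)|$ uniformly, with equality forcing definiteness; and (b) it controls $b_0$ by a nullity argument --- tube both $S$ and a competitor $S'$ into connected surfaces, use that the nullity of a connected Seifert surface is the link invariant $\eta(L)$ and that each tube raises the nullity by one, and conclude that $b_0(S')>b_0(S)$ would force $\eta(\mathcal{G}_S)>0$, contradicting definiteness. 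Together these give maximal $b_0$ and minimal $b_1$, hence maximal $\chi$, plus the converse. As written, your argument proves the lemma for knots (enough for Theorem~\ref{thm:spec-alt}) but not in the generality the paper needs; replacing the componentwise splitting with steps (a) and (b) would repair it.
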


	\begin{proof}
		First, we argue that $S$ has the maximal number of components of any Seifert surface for $L$. Suppose some Seifert surface $S'$ has $b_0(S') > b_0(S)$. We form a connected Seifert surface $\hat{S}$ for $L$ by joining the components of $S$ using $b_0(S) - 1$ tubes, and likewise form a connected surface $\hat{S}'$ by adding $b_0(S') - 1$ tubes to $S'$. We have
			$$
			\eta(\mathcal{G}_{\hat{S}}) = \eta(\mathcal{G}_{\hat{S'}}) \geq b_0(S') - 1,
			$$
		since each tube increases the nullity by one. It follows that
			$$
			\eta(\mathcal{G}_S) = \eta(\mathcal{G}_{\hat{S}}) - b_0(S) + 1 \geq b_0(S') - b_0(S) > 0,
			$$
		contradicting the definite-ness of $S$.
			
		Next, as in \cite[Proposition 3.1]{g17}, for any Seifert surface $S'$ of $L$, we have
		$$
		b_1(S') \geq |\sigma(L)| = b_1(S),
		$$
		the last equality following from the fact that $S$ is definite. This shows $S$ has minimal $b_1$, and therefore maximal Euler characteristic. Finally, any Seifert surface $S'$ with $\chi(S') = \chi(S)$ must have $b_1(S') = b_1(S) = |\sigma(L)|$, so must be definite as well.
	\end{proof}
	
	\begin{thm}
		\label{thm:main-one}
		Suppose an oriented link $L \subset S^3$ satisfies the following conditions:
			\begin{itemize}
				\item The link $L$ bounds a definite Seifert surface $S$.
				\item The branched double-cover $\Sigma(L)$ is an $L$-space.
			\end{itemize}
		Then $L$ does not admit a non-nugatory, cosmetic crossing change.
	\end{thm}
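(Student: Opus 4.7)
My plan is a proof by contradiction combining Theorem~\ref{thm:lm} with a Gordon--Litherland form comparison. Suppose $L$ admits a cosmetic, non-nugatory crossing change realized by a crossing disk $D$ with crossing arc $\gamma$; the goal is to derive a contradiction either from the Lidman--Moore theorem (when the lift $\tilde\gamma$ can be arranged nullhomologous) or from a definiteness argument (when it cannot).

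The first step, which I expect to be the main technical obstacle, is to isotope $D$ so that $D\cap S$ reduces to the arc $\gamma$ sitting on a taut Seifert surface $S$ for $L$. This extends Scharlemann and Thompson's taut--surface manipulations \cite{st89}, originally carried out on the spanning disk of the unknot, to positive genus; the natural tools would be sutured manifold hierarchies or a minimal-complexity intersection argument exploiting the tautness of $S$. The definite Seifert surface hypothesis, together with Lemma~\ref{thm:taut_def}, then forces $S$ itself to be definite.

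Performing the crossing change along $\gamma\subset S$ produces, as an abstract surface, a Seifert surface $S'$ for the post--change link $L'$ with $\chi(S')=\chi(S)$ and a canonical identification $H_1(S)=H_1(S')$. Since $L'\cong L$ by the cosmetic hypothesis, transporting $S'$ along the isotopy yields a taut Seifert surface for $L$, which by Lemma~\ref{thm:taut_def} is again definite, with signature $\sigma(L)$ and absolute determinant $\det(L)$ matching those of $\mathcal{G}_S$.

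The final step is a case split on $[\gamma]\in H_1(S,\partial S)$. If $[\gamma]=0$, then $\gamma$ separates $S$, its lift $\tilde\gamma$ separates the closed surface $p^{-1}(S)\subset\Sigma(L)$, and so $[\tilde\gamma]=0$ in $H_1(\Sigma(L))$; Theorem~\ref{thm:lm} then makes $D$ nugatory, contradicting our assumption. If $[\gamma]\neq 0$, let $v\in H_1(S)$ represent the Poincar\'e--Lefschetz dual of $[\gamma]$, so $v\neq 0$. A standard Seifert form calculation, tracking linking numbers through the crossing change, yields
\[
\mathcal{G}_S - \mathcal{G}_{S'} = \pm 2\, v v^T
\]
on the common space $H_1(S)=H_1(S')$. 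Both forms are definite of the same sign (equal signatures) and share the same determinant, so the matrix determinant lemma gives $v^T\mathcal{G}_S^{-1}v=0$; but $\mathcal{G}_S^{-1}$ inherits strict definiteness from $\mathcal{G}_S$, forcing $v=0$---a contradiction. Once the topological positioning in the first step is secured, the remaining case analysis is a straightforward application of Theorem~\ref{thm:lm} and basic linear algebra.
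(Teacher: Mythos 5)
The decisive gap is your first step. You propose to fix a Seifert surface $S$ that is taut in $S^3 - n(L)$ and then isotope the crossing disk $D$ until $D \cap S$ is the single crossing arc $\gamma$. That positioning is not achievable in general and is not a routine extension of innermost-disk arguments: the crossing circle $K = \partial D$ only has algebraic intersection zero with $S$, and its geometric intersections with a \emph{given} taut surface need not be removable by isotopy, so $D \cap S$ can contain arcs with endpoints on $K$ that no minimal-complexity argument eliminates; more fundamentally, there is no reason the crossing arc of a given crossing can be realized on a surface that is taut in the link exterior. The paper resolves this in the opposite direction: keep $D$ fixed, choose $S$ to be a Seifert surface for $L$ \emph{taut in $M = S^3 - n(K \cup L)$} (which exists since $\mathrm{lk}(K,L)=0$) meeting $D$ in a single crossing arc, and then invoke Scharlemann--Thompson's Claim 1 \cite{st89} that such an $S$ remains taut in at least two of the fillings $M_{-1}$, $M_0$, $M_\infty$; since the cosmetic hypothesis gives $M_{-1} \cong M_\infty$, the surface is taut in both, and Lemma \ref{thm:taut_def} then makes both inclusions definite. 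This sutured-manifold input is precisely what your unproven isotopy step was supposed to supply, and without it neither your claim that $S$ is definite before the change nor that the transported surface is taut (hence definite) afterward has a starting point.

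Once that step is replaced, the remainder of your proposal essentially coincides with the paper. Your case $[\gamma]=0$ is the paper's separating case; note you should check, as the paper does, that the lifted surface in $\Sigma(L)$ is orientable, so that the subsurface bounded by $\tilde{\gamma}$ gives nullhomology with $\Z$ coefficients before applying Theorem \ref{thm:lm}. Your non-separating case, via the rank-one update $\mathcal{G}_{S} - \mathcal{G}_{S'} = \pm 2\,vv^T$ and the matrix determinant lemma together with equality of signs and determinants of the two definite forms, is a coordinate-free version of the paper's computation in a basis where only the $(1,1)$ entry changes by $2$ and the determinants are compared by Laplace expansion; both are correct and equivalent.
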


	We note the second condition above implies $L$ is non-split, since $\Sigma(L)$ is a rational homology sphere. Examples of non-alternating links which satisfy the hypotheses of Theorem \ref{thm:main-one} include the knots $9_{49}$, $10_{134}$, and $10_{142}$. These knots are known to be quasi-alternating \cite{man07, ck09-2}, and hence have branched double-covers which are $L$-spaces. Further, each knot $K$ satisfies $2g(K) = |\sigma(K)|$, $g$ the genus of $K$, implying the existence of a definite Seifert surface. These examples were found with the help of KnotInfo \cite{knotinfo}.

	\begin{proof}[Proof of Theorem \protect{\ref{thm:main-one}}.]
	Let $L$ be a link satisfying the hypotheses of the theorem, and let $D$ be a cosmetic crossing disk for $L$. Let $K = \partial D$, and let $M = S^3 - n(K \cup L)$, where $n$ indicates a regular neighborhood. Following \cite{st89}, let $M_{-1}$, $M_0$, and $M_\infty$ denote the result of filling $M$ along $\partial n(K)$ by a solid torus with slope $-1$, $0$, and $\infty$ respectively. Then $M_\infty = S^3 - n(L)$, and without loss of generality, $M_{-1}$ is the result of performing the crossing change indicated by $D$. By assumption, $M_{-1} \cong M_{\infty}$.
	
	Let $S \subset M$ be a Seifert surface for $L$ which is taut in $M$. Shrinking $D$ if necessary, we may assume that $S \cap D$ is a single arc $\gamma$, which is also a crossing arc for $D$. Scharlemann and Thompson prove that $S$ is taut in at least two of $M_{-1}$, $M_0$, and $M_\infty$ \cite[Claim 1]{st89}. Thus $S$ is taut in at least one of $M_{-1}$ and $M_\infty$, and since these manifolds are homeomorphic, $S$ is taut in both. Let $S$ denote the inclusion of $S$ in $M_\infty$, and let $S'$ denote the inclusion of $S$ in $M_{-1}$. It follows from Lemma \ref{thm:taut_def} that both $S$ and $S'$ are definite.
	
	We consider two cases.
	
	\textbf{Case 1: The arc $\gamma$ separates $S$.} Let $S''$ be one of the components of $S - \gamma$, and let $\tilde{S}, \tilde{S''}, \tilde{L}, \tilde{\gamma} \subset \Sigma(L)$ denote the respective preimages of $S$, $S''$, $L$, and $\gamma$ in the branched covering $\Sigma(L) \to S^3$. (Here we view $S$ as a subset of $S^3$, rather than a subset of $S^3 - n(L)$.) Considering the classical construction of a branched cover from a Seifert surface \cite{rol76}, we see that $\tilde{S} - n(\tilde{L})$ consists of two lifted copies of $S - n(L)$; we orient these copies by lifting an orientation from $S - n(L)$. When restricted to a meridian circle of $\partial n(\tilde{L})$, the covering map $\Sigma(L) \to S^3$ has the form $z \mapsto z^2$. Thus, near such a meridian, the two components of $\tilde{S} - n(\tilde{L})$ are oriented as in Figure \ref{fig:ls}.
	
	The surface $\tilde{S}$ is constructed by gluing the two lifted copies of $S - n(L)$ together along the annuli $\tilde{S} \cap n(\tilde{L})$. With Figure \ref{fig:ls} in mind, by switching the orientation of one of the lifted copies, these annuli can be made to preserve orientation, and therefore $\tilde{S}$ is orientable. Since $\tilde{S}'' \subset \tilde{S}$, $\tilde{S}''$ is also orientable, and its boundary is exactly $\tilde{\gamma}$. The existence of $\tilde{S}''$ shows $\tilde{\gamma}$ is nullhomologous in $H_1(\Sigma(L))$, so Theorem \ref{thm:lm} implies the crossing change is nugatory in this case.
	
	\begin{figure}[H]
		\centering
		\includegraphics[height=1cm]{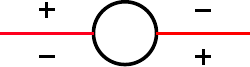}
		\caption{Two oriented lifts of $S - n(L)$, in a neighborhood of a meridian of $\partial n(\tilde{L})$}
		\label{fig:ls}
	\end{figure}

	\textbf{Case 2: The arc $\gamma$ does not separate $S$.} In this case, we choose a basis $a_1, \dots, a_n$ for $H_1(S)$, represented by curves $\alpha_1, \dots, \alpha_n \subset S$ respectively, such that $\alpha_1$ intersects $D$ one time, and $\alpha_i \cap D = \varnothing$ for $i \neq 1$. Let $G = (g_{ij})$ be the symmetric matrix representing the Gordon-Litherland form $\mathcal{G}_S$ in this basis. We also let $a_1, \dots, a_n$ denote the same basis for $H_1(S')$, i.e.~the basis induced by the inclusion $S \subset M \hookrightarrow M_{-1}$. Let $G' = (g'_{ij})$ be the corresponding matrix representing $\mathcal{G}_{S'}$.
	
	We have $|\det(G)| = |\det(G')| = \det(L)$, and since $\mathcal{G}_S$ and $\mathcal{G}_{S'}$ are both definite of the same rank and sign, determined by $\sigma(L)$, $\det(G) = \det(G')$. Further, by inspecting how $S$ changes in a neighborhood of $D$ when $(-1)$-surgery is performed, we calculate that $g'_{11} + 2 = g_{11}$, and $g_{ij} = g'_{ij}$ for $i$ and $j$ not both equal to one. We consider computing the determinants of $G$ and $G'$ using a Laplace expansion along the top row---since the two quantities are equal, and the matrices differ at only one entry, we find
	$$
	g_{11}\det(G_{11}) = g'_{11}\det(G'_{11}) = (g_{11} + 2)\det(G_{11}),
	$$
	where $G_{11}$ denotes the matrix formed by removing the first row and column of $G$. This matrix represents the restriction of $\mathcal{G}_S$ to the subspace of $H_1(S)$ spanned by $a_2, \dots, a_n$; as the restriction of a definite form, this form is also definite, and hence $\det(G_{11}) \neq 0$. We conclude that
	$$
	g_{11} = g_{11} + 2,
	$$
	a contradiction which indicates this case cannot occur.
	\end{proof}

	\begin{proof}[Proof of Corollary \protect{\ref{thm:seifert}}]
		Following the proof of Theorem \ref{thm:main-one}, we obtain two taut Seifert surfaces for $L$, with the crossing arc $\gamma$ embedded as a non-separating arc in each. Choosing the homology bases $a_1, \dots, a_n$, as above, gives the desired Seifert matrices.
	\end{proof}

	Finally, we give a minor application of Corollary \ref{thm:seifert}.
	
	\begin{cor}
		\label{thm:minor}
		Suppose a knot $L \subset S^3$ admits a cosmetic, non-nugatory crossing change, and $\Sigma(L)$ is an $L$-space. Then, letting $m$ denote the size of a minimal generating set for $H_1(\Sigma(L))$, we have $m < 2g(L)$.
	\end{cor}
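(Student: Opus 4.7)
The plan is to invoke Corollary~\ref{thm:seifert} and translate it into an arithmetic statement about the symmetrized Seifert forms. Writing $A = V + V^T$ and $A' = V' + V'^T$ for the symmetrizations of the two Seifert matrices Corollary~\ref{thm:seifert} provides, we obtain symmetric $2g(L) \times 2g(L)$ integer matrices that agree outside the $(1,1)$ position, where they differ by exactly $\pm 2$. Both matrices present $H_1(\Sigma(L); \Z)$ as abelian groups, so they share the same Smith normal form, and in particular the same first determinantal divisor $D_1(A) = D_1(A')$---the gcd of all entries of either matrix.

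Next, the minimum number $m$ of generators of $H_1(\Sigma(L); \Z)$ equals $2g(L)$ minus the number of invariant factors equal to $1$, so the desired inequality $m < 2g(L)$ is equivalent to $D_1(A) = 1$. A direct computation gives $D_1(A) \in \{1, 2\}$: since $D_1(A)$ divides every entry of $A$, and $D_1(A') = D_1(A)$ divides every entry of $A'$, it must divide the difference $A_{11} - A'_{11} = \pm 2$.

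The crux, and the step I expect to be the main obstacle, is ruling out $D_1(A) = 2$. Here I would exploit the parity of the knot determinant: if every entry of $A$ were even, then $A = 2B$ for a symmetric integer matrix $B$, giving $|\det A| = 2^{2g(L)} |\det B|$. But $|\det A| = \det(L) = |\Delta_L(-1)|$, and the Alexander polynomial of a knot satisfies $\Delta_L(-1) \equiv \Delta_L(1) = \pm 1 \pmod 2$, so $\det(L)$ is odd. Combined with Scharlemann and Thompson's theorem~\cite[Theorem~1.4]{st89} that the unknot admits no non-nugatory cosmetic crossing change---so $g(L) \geq 1$ under our hypotheses---this contradicts $4 \mid \det(L)$. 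Hence $D_1(A) = 1$, and therefore $m < 2g(L)$.
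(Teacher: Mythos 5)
Your argument is correct and follows essentially the same route as the paper: both pass from the two Seifert/Gordon--Litherland matrices differing by $2$ in the $(1,1)$ entry to the first determinantal divisor via Smith normal form, and both rule out the value $2$ using the oddness of a knot's determinant (the paper via $\prod_i \delta_i^G = \det(L)$, you via the observation that an all-even matrix has determinant divisible by $2^{2g}$ --- the same parity obstruction). The only cosmetic difference is that you quote Corollary~\ref{thm:seifert} and symmetrize, while the paper works directly with the matrices $G, G'$ from the proof of Theorem~\ref{thm:main-one}.
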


	\begin{proof}
		Let $G$ and $G'$ be the two matrices obtained in the proof of Theorem \ref{thm:main-one}, representing two Gordon-Litherland forms of $L$ with rank $2g(L)$. We use the fact that $G$ and $G'$ give presentations for the finite abelian group $H_1(\Sigma(L))$, and compute this group's invariant factors. For an invertible matrix $A$, let $\Gamma^A_i$ denote the greatest common divisor of the determinants of the $i$-by-$i$ minors of $A$, and let $\delta^A_i = \Gamma^A_i/\Gamma^A_{i - 1}$. We recall, via the Smith normal form of $A$, that the invariant factors of the abelian group presented by $A$ are given by the set of all $\delta^A_i$ not equal to $1$.
		
		Since $G$ and $G'$ have the same rank and present the same group, we have
		$$
		\gcd_{ij}(g_{ij}) = \delta^G_1 = \delta^{G'}_1 = \gcd_{ij}(g'_{ij}).
		$$
		Because $g_{11} = g'_{11} + 2$, $\delta^G_1$ divides $2$. Additionally, since $\prod_i \delta^G_i = \det(L)$, and knots have odd determinant, we have $\delta^G_1 = 1$. Thus
		$m < \text{rk}(G) = 2g$, as desired.
	\end{proof}

	This result extends \cite[Theorem 1.1(2)]{bfkp12}. In general $m \leq 2g(L)$, but equality is occasionally attained. For example, the pretzel knot $K = P(9,9,9,9,-27)$ is quasi-alternating by \cite[Theorem 3.2(1)]{ck09-2}, hence has branched double-cover an $L$-space. The knot $K$ has genus two and $H_1(\Sigma(K)) \cong \Z/9 \oplus \Z/9 \oplus \Z/9 \oplus \Z/99$, so Corollary \ref{thm:minor} shows $K$ does not admit cosmetic crossings. This example is easily generalized, for instance by considering the family of pretzel knots $P(m^2, m^2, m^2, m^2, -3m^2)$ with $m$ odd, to produce many new examples of knots which do not admit cosmetic crossings. Choosing square numbers ensures the resulting pretzel knot is not included in the main theorem of \cite{lm17}.
	
	\appendix
	
	\section{Extending Theorem \protect\ref{thm:lm} to Links}
	
	In what follows, let $L \subset S^3$ be a link, $D$ a crossing disk, and $\gamma$ the associated crossing arc. As above, let $\tilde{\gamma}$ denote the closed curve which is the preimage of $\gamma$ in the branched cover $\Sigma(L)$.
	
	The extension of Theorem \ref{thm:lm} to links ultimately reduces to the following proposition.
	
	\begin{prop}
		\label{thm:links_one}
		Suppose $\det(L) \neq 0$, and the crossing change associated with $D$ is cosmetic. If $\tilde{\gamma}$ is a null-homologous unknot in $\Sigma(L)$, then $D$ is nugatory.
	\end{prop}

	To complete the argument, the reader may consult the proof of \cite[Thm.~2]{lm17}, using Proposition \ref{thm:links_one} in place of \cite[Prop.~12]{lm17}. Our proof closely follows that of the latter proposition, and we set up some additional notation before sketching it. Let $B \subset S^3$ be a regular neighborhood of $\gamma$, chosen so that $B \cap D$ is a disk contained in int$(D)$, and so that $B \cap L$ consists of two arcs. Observe that the preimage $\tilde B \subset \Sigma(L)$ of $B$ under the branched covering is a solid torus, and let $N = \Sigma(L) - \tilde B$. Since $\det(L) \neq 0$, $\Sigma(L)$ is a rational homology sphere, and a Mayer-Vietoris argument shows $H_2(N;\Q) \cong 0$ and $H_1(N;\Q) \cong \Q$. There is a unique slope $\lambda_N$ of $\partial N$ which generates the kernel of the inclusion-induced map $H_1(\partial N; \Q) \to H_1(N; \Q)$. This slope $\lambda_N$ is called the {\em rational longitude} of $N$; we refer the reader to \cite{lm17, wat12} for more details.

	\begin{proof}
		Let $\tilde\Gamma \subset \Sigma(L)$ be a disk with boundary $\tilde\gamma$; by definition, $\tilde\Gamma \cap \partial N$ is the rational longitude $\lambda_N$ of $N$. Let $\tau$ denote the covering involution on $\Sigma(L)$. By the equivariant Dehn's Lemma, we may assume that either $\tau(\tilde\Gamma) \cap \tilde\Gamma = \varnothing$ or $\tau(\tilde\Gamma) = \tilde\Gamma$.
		
		Suppose $\tau(\tilde\Gamma) \cap \tilde\Gamma$ is empty. This implies $\tilde\Gamma$ descends to a properly embedded disk $\Gamma$ in $S^3 - B$. Since $\tilde\Gamma$ avoids the fixed-point set of $\tau$, which is the preimage of $L$, the disk $\Gamma$ is disjoint from $L$. To show $D$ is nugatory, we will show that $\partial \Gamma$ is parallel to $D \cap \partial B$ in $\partial B - L$. If follows that $\partial D$ bounds a disk disjoint from $L$, formed by gluing $\Gamma$ to the annulus $D - B$. To show $\partial \Gamma$ and $D \cap \partial B$ are parallel in $\partial B - L$, it suffices to show that $D \cap \partial B$ lifts to $\lambda_N$ in $\partial N$.
		
		Let $L_0$ be the link formed by replacing the crossing ball $B$ with the ball shown in Figure \ref{fig:zero}, which we label $B_0$. Let $\Delta$ denote the Alexander polynomial, which satisfies the skein relation
		$$
		\Delta_{L_+} (x) - \Delta_{L_-}(x) = -(x^{-1/2} - x^{-1/2})\Delta_{L_0}(x).
		$$
		Since $L_+ = L_- = L$, we conclude $\Delta_{L_0} \equiv 0$. In particular, $\det(L_0) = \Delta_{L_0}(-1) = 0$, so $H_1(\Sigma(L_0))$ is infinite, and by Poincar\'e duality and the universal coefficient theorem, so is $H_2(\Sigma(L_0))$. Let $\tilde{B}_0$ be the preimage of $B_0$ in $\Sigma(L_0)$, which is equivalent to a Dehn filling of $N$ along some slope $\gamma_0$. Using the fact that $H_2(N;\Q) \cong 0$, the Myer-Vietoris theorem gives an exact sequence
		$$
		0 \to H_2(\Sigma(L_0);\Q) \to H_1(\partial N; \Q) \to H_1(N; \Q) \oplus H_1(\tilde{B}_0; \Q).
		$$
		Let $a \in H_2(\Sigma(L_0); \Q)$ be non-trivial, and let $\partial a$ be its (non-trivial) image in $H_1(\partial N; \Q)$. By exactness, $\partial a$ is in the kernel of the second map, so $\partial a$ is trivial in $H_1(\tilde{B}_0; \Q)$ and $H_1(N; \Q)$. Since $\partial a$ is trivial in $H_1(\tilde{B}_0; \Q)$, $\partial a$ is a rational multiple of $\gamma_0$ (forgetting the orientation of the former). Since $\partial a$ is trivial in $H_1(N; \Q)$, $\partial a$ is a rational multiple of $\lambda_N$. Thus $\gamma_0 = \lambda_N$.
		
		We've shown the rational longitude of $N$ corresponds to the slope $\gamma_0$ of the Dehn filling $\tilde{B}_0$. Since $D \cap B_0$ is a disk separating the two components of $L_0 \cap B_0$, $D \cap B_0$ lifts to a meridian disk of $\tilde{B}_0$, and $D \cap \partial B_0 = D \cap \partial B$ lifts to $\gamma_0 = \lambda_N$. This completes the proof in this case, and the case of $\tau(\tilde\Gamma) = \tilde\Gamma$ is handled just as in the proof of \cite[Prop.~12]{lm17}.
	\end{proof}

	\begin{figure}
		\centering
		\subcaptionbox{$L_+$}{
			\includegraphics[height=2cm]{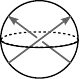}
		}
		\hspace{1cm}
		\subcaptionbox{$L_-$}{
			\includegraphics[height=2cm]{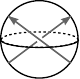}
		}
		\hspace{1cm}
		\subcaptionbox{$L_0$ \label{fig:zero}}{
			\includegraphics[height=2cm]{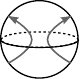}
		}
		\caption{}
		\label{fig:cballs}
	\end{figure}
	
	\bibliography{volume_conjecture}{}
	\bibliographystyle{amsplain}
	
\end{document}